\newtheorem*{thm}{Theorem}
\newtheorem{lemma}{Lemma}
\begin{document}

\title[]{Roots of trigonometric polynomials\\ and the Erd\H{o}s-Tur\'{a}n Theorem}
\keywords{Trigonometric polynomial, rigidity, roots, Erd\H{o}s- Tur\'{a}n Theorem.}
\subjclass[2010]{30C15  (primary) and 26C10, 31A99 (secondary).}

\author[]{Stefan Steinerberger}
\address{Department of Mathematics, Yale University, New Haven, CT 06511, USA}
\email{stefan.steinerberger@yale.edu}

\thanks{S.S. is partially supported by the NSF (DMS-1763179) and the Alfred P. Sloan Foundation.}

\begin{abstract} We prove, informally put, that it is not a coincidence that $\cos{(n \theta)} + 1 \geq 0$ and that the roots of $z^n + 1 =0$ are uniformly distributed in angle -- a version of the statement holds for all trigonometric polynomials with `few' real roots. The
 Erd\H{o}s-Tur\'{a}n theorem states that if $p(z) =\sum_{k=0}^{n}{a_k z^k}$ is suitably
normalized and not too large for $|z|=1$, then its roots
are clustered around $|z| = 1$ and equidistribute in angle at scale $\sim n^{-1/2}$. We establish a connection between the rate of equidistribution of roots in angle and the number of sign changes of the corresponding trigonometric polynomial $q(\theta) = \Re \sum_{k=0}^{n}{a_k e^{i k \theta}}$. If $q(\theta)$ has  $\lesssim n^{\delta}$ roots for some $0 < \delta < 1/2$, then the roots of $p(z)$ do not frequently cluster in angle at scale $\sim n^{-(1-\delta)} \ll n^{-1/2}$.
\end{abstract}

\maketitle

\section{Introduction }
\subsection{Introduction.} 
The Erd\H{o}s-Tur\'{a}n Theorem \cite{erd} shows that polynomials $p:\mathbb{C} \rightarrow \mathbb{C}$
$$ p(z) = \sum_{k=0}^{n}{a_k z^k}$$
that are small on $|z| = 1$ are somewhat rigid: their roots are clustered around the boundary of the unit disk $|z| = 1$ and equidistribute in angle.
Without additional assumptions, there are two obvious counterexamples to this statement: $p(z) = z^n$ (being small on $|z|=1$ but having all roots in the origin) and $p(z) = 1 + \varepsilon z^n$ (having all roots at distance $\sim \varepsilon^{-1/n}$ which is arbitrarily far away from the unit disk). These two examples necessitate
some control on the constant coefficient $a_0$ and the leading coefficient $a_n$. Here and henceforth we shall denote the $n$ roots by $z_1, \dots, z_n$.
Moreover, we will use the quantity $h(p)$ to assign a notion of `size' to a polynomial via
$$ h(p) = \frac{1}{2\pi} \int_{0}^{2\pi}{\log^{+}\left( \frac{|p(e^{i\theta})|}{\sqrt{|a_0|}} \right) d\theta},$$
where $\log^+(x) = \max(0, \log{x})$.
A version of the Erd\H{o}s-Tur\'{a}n Theorem that was recently given by Soundararajan \cite{sound} can then be phrased as follows.

\begin{thm}[Soundararajan \cite{sound}] We have, assuming $|a_n|  = 1$,
$$ \max_{J \subset \mathbb{T}} \left| \frac{\#\left\{1 \leq k \leq n: \arg~z_k \in J\right\}}{n} - \frac{|J|}{2\pi} \right| \leq \frac{8}{\pi} \frac{\sqrt{h(p)}}{\sqrt{n}},$$
where the maximum runs over all intervals $J \subset \mathbb{T}$, where $\mathbb{T}$ is the one-dimensional torus scaled to lentgh $2\pi$ and identified with the boundary of the unit disk.
\end{thm}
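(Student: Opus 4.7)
The natural strategy is to combine a quantitative Erdős-Turán-Koksma inequality for angular discrepancy with a Parseval-type bound on partial Weyl sums, where the latter are to be extracted from $\log|p(e^{i\theta})|$ via the factorization $p(z) = a_n\prod_k(z-z_k)$. Concretely, I would first invoke the standard bound
\[
D_n \;\le\; \frac{C_1}{N+1} \;+\; \frac{C_2}{n}\sum_{h=1}^N \frac{|S_h|}{h}, \qquad S_h := \sum_{k=1}^n e^{ih\arg z_k},
\]
valid for every $N\ge 1$, whose explicit (classical) constants should account for the factor $8/\pi$ after optimization.

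The Weyl sums $S_h$ must then be tied to quantities visible from $p$ itself. My preferred device is to fold roots outside the unit disk into its interior by setting $w_k = z_k$ when $|z_k|\le 1$ and $w_k = 1/\bar z_k$ otherwise; then $|w_k|\le 1$ while $\arg w_k = \arg z_k$ in either case. A direct Fourier computation using the standard expansion of $\log|e^{i\theta}-z|$ (two cases according to whether $|z|<1$ or $|z|>1$) yields $\widehat f(h) = -\frac{1}{2h}\sum_k \bar w_k^{\,h}$ for $h\ge 1$, so $\sum_k w_k^h$ is controlled explicitly by $\widehat f(h)$.

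The bridge from $\sum_k w_k^h$ to $S_h$ is provided by $|S_h - \sum_k w_k^h| \le \sum_k(1-|w_k|^h) \le h\sum_k(-\log|w_k|)$. Here the normalization $|a_n|=1$ becomes crucial: since $|a_0| = \prod_k|z_k|$ and $\log M(p) = \widehat f(0)$ by Jensen's identity, the definition of $h(p)$ gives $\log M(p) \le h(p)+\tfrac12\log|a_0|$, from which $\sum_k(-\log|w_k|) = 2\log M(p) - \log|a_0| \le 2\,h(p)$. The substitution error is therefore at most $2h\cdot h(p)$, contributing $O(N h(p)/n)$ to the discrepancy bound. Applying Parseval and Cauchy-Schwarz to $\sum_{h\le N}|\widehat f(h)|/h$, adding this error, and choosing $N\sim\sqrt{n/h(p)}$ to balance all terms should recover the stated $\sqrt{h(p)/n}$ bound.

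The main technical obstacle is the Parseval step: the identity $\sum_h|\widehat f(h)|^2 = \|f\|_{L^2}^2$ involves the full $\log|p|$, whereas $h(p)$ only controls the positive part of $\log(|p|/\sqrt{|a_0|})$. Some care---either a one-sided duality against a nonnegative trigonometric majorant of Fejér or Selberg type, or a second use of the Jensen bound to dominate negative contributions by positive ones---is needed to close this asymmetry, and this delicate point is precisely where the sharp constant $8/\pi$ must be hidden.
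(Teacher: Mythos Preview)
The paper does not prove this statement at all: it is quoted, with attribution to Soundararajan \cite{sound}, purely as background in the introduction, and no proof or proof sketch is offered anywhere in the text. The paper's own contribution (the main Theorem in \S1.3, proved in \S2) is a different result relating the number of real zeros of $\Re\, p(e^{i\theta})$ to angular clustering of the roots of $p$, and its proof proceeds via the argument principle and direct estimates on $\frac{d}{dt}\arg(e^{it}-z_k)$, not via discrepancy/Weyl-sum machinery. So there is no ``paper's own proof'' of the Soundararajan theorem to compare your proposal against.

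As to your sketch on its own merits: the skeleton (Erd\H{o}s--Tur\'an--Koksma plus the reflection $z_k\mapsto 1/\bar z_k$ plus the Fourier identity $\widehat f(h)=-\tfrac{1}{2h}\sum_k \bar w_k^{\,h}$ plus the Mahler-measure bound $2\log M(p)-\log|a_0|\le 2h(p)$) is all correct and is indeed the classical line of attack. The gap you yourself flag is real: a naive Parseval/Cauchy--Schwarz bound on $\sum_{h\le N}|\widehat f(h)|$ requires $\|\log|p|\|_{L^2}$, which is not controlled by $h(p)$ and can in fact be infinite when roots lie on $|z|=1$. Soundararajan's actual argument avoids Parseval entirely; the sharp constant comes from testing $\log(|p|/\sqrt{|a_0|})$ against an explicitly constructed nonnegative trigonometric kernel (so that only $\log^+$ matters), rather than from an $L^2$ estimate. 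Your parenthetical suggestion of a ``one-sided duality against a nonnegative trigonometric majorant'' is thus the right repair, but it is not a minor technicality---it is essentially the whole proof, and your Parseval step should be discarded rather than patched.
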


Erd\H{o}s and Tur\'{a}n \cite{erd} established the result with constant 16 instead of $8/\pi$. Ganelius \cite{tord} 
gave a proof using a slightly larger quantity than $h(p)$ (this was then improved by Mignotte \cite{mign}). Amoroso \& Mignotte \cite{amor} have shown that the
constant $8/\pi$ cannot be replaced by $\sqrt{2}$. We also refer to two recent variants of the result by Totik \& Varj\'{u} \cite{totik} and Erd\'{e}lyi \cite{erde} and to Blatt \cite{blatt}, Conrey \cite{conrey} and Totik \cite{totik1} for results in a similar spirit.

\subsection{An Observation.} There is an interesting class of trigonometric polynomial that appears naturally in Fourier Analysis and Analytic Number Theory: trigonometric polynomials
that do not vanish. The Fej\'{e}r kernel is a particularly well-known example. If we normalize it to have leading coefficient $|a_n|=1$ it is given by
$$ p_n(\theta) = n+1 + 2\sum_{k=1}^{n}{(n+1-k) \cos{( \theta)}}.$$
Another example is a suitable truncation of the Poisson kernel. Normalized to $|a_n|=1$, it is given by
$$ q_n(\theta) = \rho^{-n} + 2\sum_{k=1}^{n}{\rho^{k-n}\cos{(k \theta)}},$$
where $0 < \rho < 1$ and $n$ has to be sufficiently large depending on $\rho$. A slightly less well-known 1912 example due to W. H. Young \cite{young}, again normalized to $|a_n|=1$, is
$$ r_n(\theta) = n + \sum_{k=1}^{n}{\frac{n}{k} \cos{(k \theta)}}.$$
Nonnegative trigonometric polynomials and their properties have been actively studied for a very long time, we refer to a survey of Dimitrov \cite{dima}, the papers \cite{alzer, and, askey, dima2, rogo, leo} and references therein.
One can take these trigonometric polynomials and interpret them as the real part of a polynomial in the complex plane evaluated on the boundary of the unit disk, this leads to the polynomials
$$ p_n^*(z) = n +1+ 2\sum_{k=1}^{n}{(n+1-k)z^k},~~\qquad~~ q_n^*(z) = \rho^{-n} + 2\sum_{k=1}^{n}{\rho^{k-n} z^k},~~~\qquad~ r_n^*(z) =  n + \sum_{k=1}^{n}{\frac{n}{k} z^k}.$$
We note that the connection between nonnegative trigonometric polynomials and the associated polynomials in the complex plane has been studied before. Most notably, there is a rich interaction in terms of univalence: in 1910 Fej\'{e}r conjectured that
$$ \sum_{k=1}^{n}{ \frac{\sin{(k \theta)}}{k}} \geq 0 \qquad \mbox{for}~0 \leq \theta \leq \pi.$$
This was proven by Jackson \cite{jackson} in 1911 and Gr\"onwall \cite{gronwall} in 1912. In his 1915 PhD thesis, J. W. Alexander showed \cite{alexander} that
$$ \sum_{k=1}^{n}{\frac{z^k}{k}} \qquad \mbox{is univalent on the unit disk}$$
and these two results are clearly at least related in spirit. The connection was made definitive by Dieudonne \cite{dieu} in 1931 when he proved that
$$ \sum_{k=1}^{n}{a_k z^k} \qquad \mbox{is univalent if and only if} \qquad \sum_{k=1}^{n}{a_k z^{k-1} \frac{\sin{(k \theta)}}{\sin{(\theta)}}} \neq 0$$
for all $z \in \mathbb{D}$, where $\mathbb{D}$ denotes the unit disk, and all $0 \leq \theta \leq \pi.$ We refer to the survey by Gluchoff \& Hartmann \cite{gluchoff} for more details of the story.
We ask, partially motivated by Fig. 1, a different question: what can be said about the zeroes of these polynomials? Fig. 1 suggests an interesting answer: they are extremely regularly spaced.
 We were interested in whether this was coincidence and this motivated the paper. We note that it is easy to construct nonnegative trigonometric polynomials whose associated polynomials have roots that are not quite as regular (see the third example in Fig. 1) but even then we observe regularity for `most' angles.
\begin{figure}[h!]
\begin{minipage}[l]{.32\textwidth}
\centering
\includegraphics[width = 4cm]{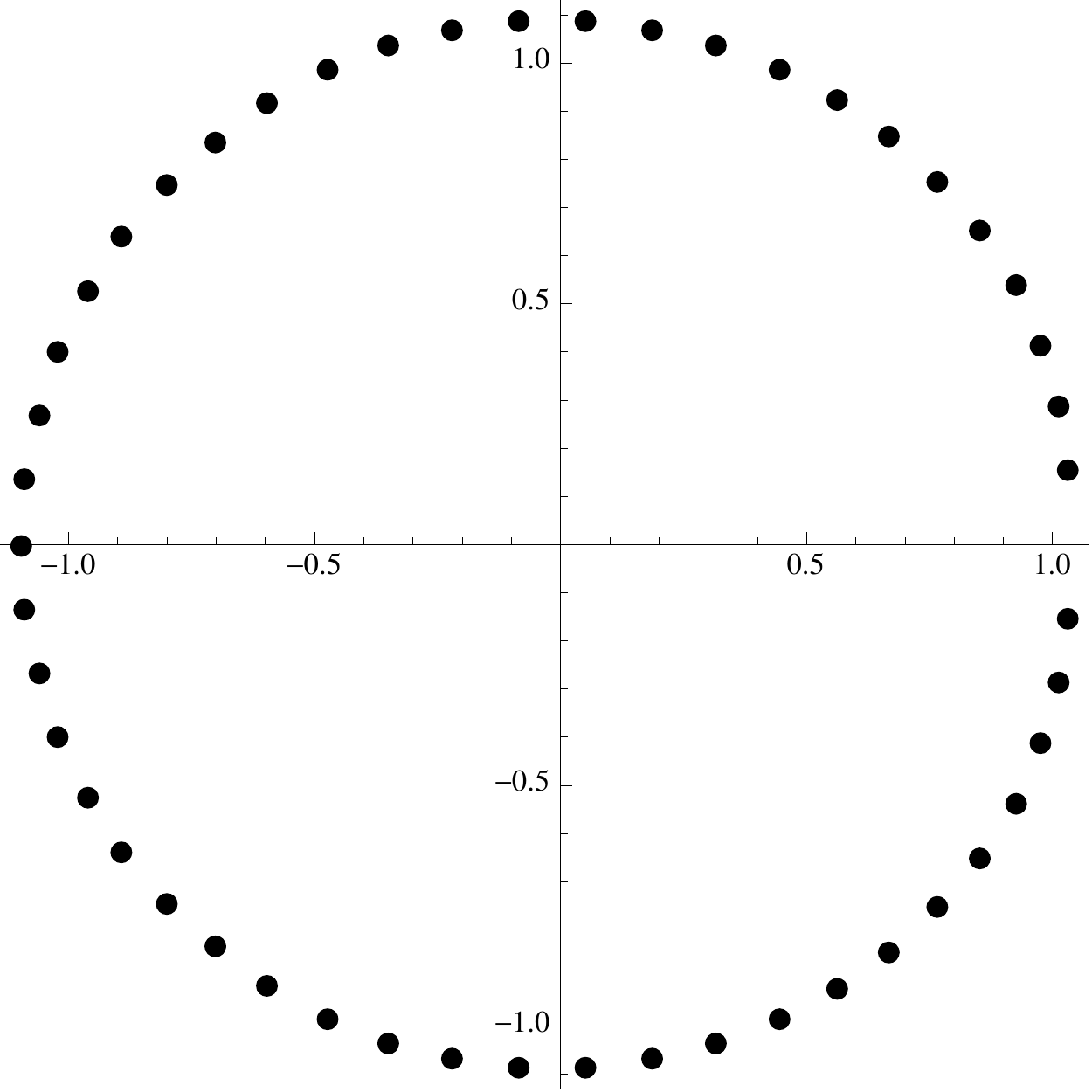} 
\end{minipage} 
\begin{minipage}[c]{.32\textwidth}
\includegraphics[width = 4cm]{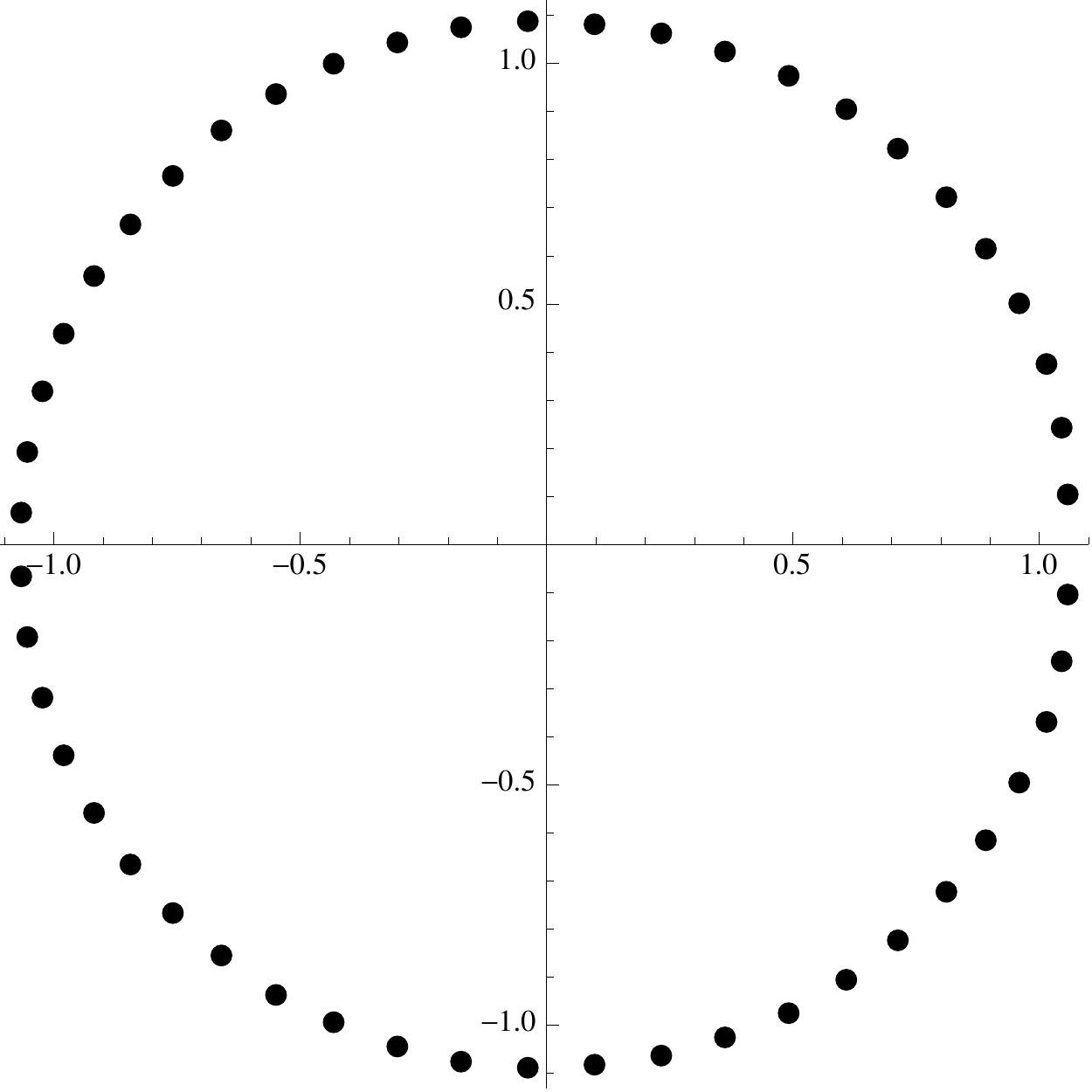} 
\end{minipage} 
\begin{minipage}[r]{.32\textwidth}
\includegraphics[width = 4cm]{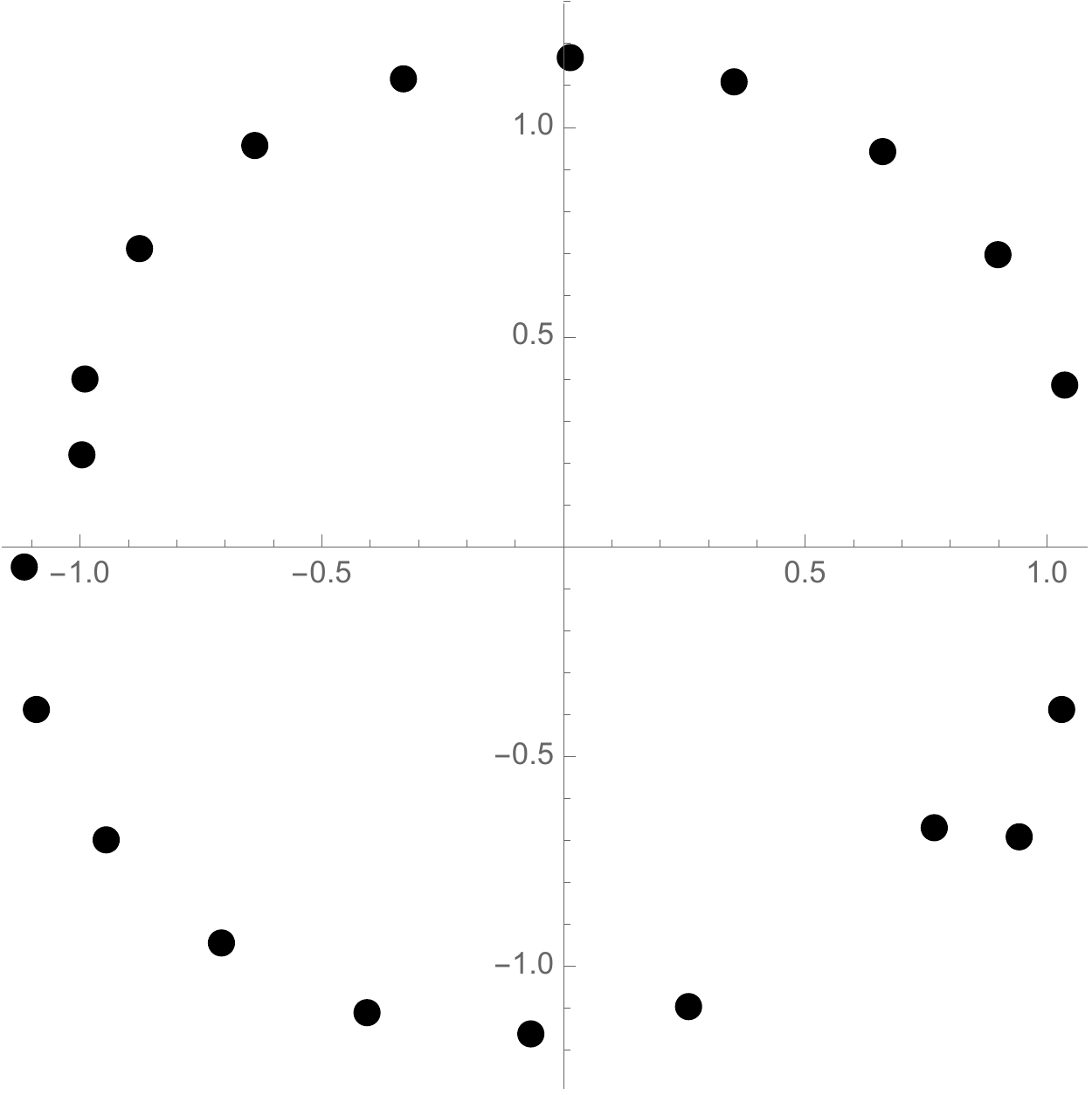} 
\end{minipage} 
\caption{Roots of $p_{50}^*(z)$ (left),  $r_{50}^*(z)$ (center) and $p_{20}^*(z)+p_{20}^*(e^{i}z)$ (right).}
\end{figure}

We need to be mindful of the Erd\H{o}s-Tur\'{a}n theorem: roots of polynomials already equidistribute in angle without any further assumptions (apart from a normalization) at scale $\sim n^{-1/2}$. We will establish, for polynomials whose corresponding trigonometric polynomial is nonnegative or has few roots, a regularity statement at lower scales (indeed, all the way down to $n^{-1}$). We do not claim that this is necessarily the only regularity statement one could obtain and hope that this paper can inspire some subsequent work.

\subsection{Result.} We can now state our main result. For any interval $J \subset \mathbb{T} \cong [0,2\pi]$ we will say that $J$ contains too many roots if the number of roots that lie in the corresponding sector is five times larger than we would expect for a typical interval of length $|J|$, i.e.
$$ \# \left\{1 \leq k \leq n: \arg z_k \in J \right\} \geq 5 \frac{|J|}{2\pi} n.$$
We now state the result: intervals containing too many roots are rare.
\begin{thm} Assume the polynomial $p(z) = \sum_{k=0}^{n}{a_k z^k}$ is normalized to $|a_n| = 1$ and 
$$ \Re \sum_{k=0}^{n}{a_k e^{i k\theta}} \qquad \mbox{has}~X~\mbox{real roots on}~[0, 2\pi].$$
Then, for $0 \leq \alpha \leq 1$, the maximum number $I$ of disjoint intervals of length $n^{-\alpha}$ for which
$$ \# \left\{1 \leq k \leq n: \arg z_k \in J \right\} \geq 5\frac{ |J|}{2\pi} n,$$
  can be bounded by
$$ I \lesssim  n^{\alpha - 1}X + n^{2\alpha -1}   \int_{\partial \mathbb{D}}{ \log{|p(e^{it})|} dt}.$$
\end{thm}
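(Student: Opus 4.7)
The strategy is to show that each bad interval $J_i$ forces the local integral $\int_{J_i}\log|p(e^{i\theta})|d\theta$ to be significantly negative, and then to balance this total local deficit against two global quantities: the integral $\int_{\partial\mathbb{D}}\log|p(e^{it})|dt$ and the number $X$ of real zeros of $q(\theta)=\Re p(e^{i\theta})$.

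First, I would expand $\log|p(e^{i\theta})|=\sum_{k=1}^{n}\log|e^{i\theta}-z_k|$ and estimate $\int_{J_i}$ term by term. For a bad interval $J_i$ of length $\ell=n^{-\alpha}$, the $\geq(5/(2\pi))n^{1-\alpha}$ clustered roots (those with $\arg z_k\in J_i$) produce a Jensen-type deficit: in the worst case, each contributes $\sim-\alpha n^{-\alpha}\log n$ via $\int_{J_i}\log|e^{i\theta}-z_k|d\theta$, summing to a negative total of order $-n^{1-2\alpha}\log n$ per bad interval. Contributions from far-away roots $|z_k|\gg 1$ are positive and bounded by $\ell\log^+|z_k|$ per root, while roots whose argument lies outside $J_i$ contribute $O(\ell)$ terms that are essentially harmless after summation.

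Next, I would sum this per-interval estimate over the $I$ disjoint bad intervals. Because each root $z_k$ lies in at most one bad-interval sector and $\sum_{k}\log^+|z_k|=(2\pi)^{-1}\int_{\partial\mathbb{D}}\log|p|dt$ by Jensen's formula (using $|a_n|=1$), the positive corrections aggregate to a controlled multiple of $\int\log|p|$. This yields an inequality relating $I$, $\int\log|p|$, and $\sum_i\int_{J_i}\log|p|$, and already accounts for the $n^{2\alpha-1}\int\log|p|$ term in the bound when the global integral is the dominant cost.

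Finally, the sign-change hypothesis on $q$ is used to provide a lower bound on $\sum_i\int_{J_i}\log|p|$: on each of the at most $X$ constant-sign intervals of $q$, the pointwise inequality $\log|q|\leq\log|p|$ holds, and a Bernstein/Markov inequality ($\|q'\|_\infty\leq n\|q\|_\infty$) prevents $q$ from staying uniformly very small on a subinterval of length much larger than $n^{-1}$ without a nearby sign change being forced at the endpoints. Each of the $X$ sign changes can therefore absorb at most a bounded contribution to the negative deficit, and combining this lower bound with the upper bound obtained earlier yields the asserted inequality $I\lesssim n^{\alpha-1}X+n^{2\alpha-1}\int\log|p|dt$. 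The hardest step is this last one: converting the sign-change count $X$ into a sharp matching lower bound on $\sum_i\int_{J_i}\log|p|$ with exactly the scaling $n^{\alpha-1}X$. The length $n^{-\alpha}$ of bad intervals sits between the natural Bernstein oscillation scale $n^{-1}$ and the unit scale of the torus, and calibrating this interpolation---while absorbing residual logarithmic factors into the implicit constant---is the delicate heart of the argument.
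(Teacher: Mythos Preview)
Your plan is built on the modulus $\log|p(e^{i\theta})|$, whereas the paper works with the conjugate quantity $\arg p(e^{it})$. In the paper the roots lying in a bad sector $J$ are split into those with $|z_k|\geq 1+n^{-\alpha}$ (charged to the Jensen sum $\sum_{|z_k|>1}\log|z_k|$), those with $1<|z_k|<1+n^{-\alpha}$ (each of which pulls $\int_J\tfrac{d}{dt}\arg p(e^{it})\,dt$ down by at least a fixed positive amount), and those with $|z_k|\leq 1$ (of which there are at most $X/2$ in total, by the argument principle applied to $p$ on $\partial\mathbb{D}$). The link to $X$ is then immediate and uses no Bernstein input at all: a large net change of $\arg p(e^{it})$ over $J$ means the curve $t\mapsto p(e^{it})$ winds around the origin, and each full turn produces two zeros of $\Re p$ inside $J$. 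Summing over the bad intervals and using that the total number of such zeros is $X$ gives the bound directly.

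The genuine gap in your outline is Step~3. Bernstein's inequality bounds $|q'|$ from \emph{above} by $n\|q\|_\infty$; it limits how fast $q$ can \emph{grow}, so if anything it makes it easier, not harder, for $q$ to stay small on a long interval without changing sign. It cannot ``force a nearby sign change.'' A concrete obstruction: with $c_n=2^{\,n-1}$ chosen so that $|a_n|=1$, the polynomial $q(\theta)=c_n\bigl(\varepsilon+(1-\cos\theta)^n\bigr)$ satisfies $q>0$ (so $X=0$) for every $\varepsilon>0$, yet on the fixed interval $|\theta|\leq 1/2$ one has $q(\theta)\leq c_n\varepsilon+2^{-2n-1}$, hence $\int_{|\theta|\le 1/2}\log|q|\,d\theta$ is of order $-n$, far below any bound depending only on $X=0$. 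Since your lower bound on $\int_{J_i}\log|p|$ is routed through $\log|q|\leq\log|p|$, the passage from $X$ to the claimed $n^{\alpha-1}X$ term collapses. A related omission is that your Jensen identity $\sum_k\log^+|z_k|=(2\pi)^{-1}\int\log|p|$ is blind to roots with $|z_k|\leq 1$; such roots can sit on a bad arc and make $\int_{J_i}\log|p|$ very negative at zero Jensen cost, and the only available control on their number is again the argument principle via $X$, which your outline never invokes. In short, the modulus route you sketch would still need exactly the argument/winding mechanism the paper uses, and the Bernstein substitute does not supply it.
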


There are several ways of interpreting the statement.
\begin{enumerate}
\item An interesting special case is $\alpha = 1/2$ corresponding to the scaling of the classical Erd\H{o}s-Tur\'{a}n theorem. 
Here it says that trigonometric polynomials with $\lesssim \sqrt{n}$ roots are better behaved than the Erd\H{o}s-Tur\'an theorem predicts except for possibly a very small number of intervals (the integral is $\sim \log{n}$ for $p_n$ and $r_n$ in \S 1.2).\\
\item Let us consider trigonometric polynomials with very few roots, $X \lesssim n^{\varepsilon}$, and $\alpha = 1-\varepsilon$. This corresponds to very short intervals $n^{-\alpha} = n^{\varepsilon} n^{-1}$. As it turns out, most of these intervals behave as expected unless the logarithmic integral is large. In particular, as long as the logarithmic integral is smaller than $n^{\varepsilon/2}$, we see that at most $n^{1-(3/2)\varepsilon}$ intervals are `dense' in the sense of containing five times as many roots as expected. There are $\sim n^{1-\varepsilon}$ intervals in total, we conclude that a typical interval does not have any clustering. Positive trigonometric polynomials give rise to polynomials whose roots are extremely regularly structured at small scale; to the best of our knowledge, this property of trigonometric polynomials is new.\\
\item The statement is empty for $X=n$. However, it can be used as a contrapositive statement: if many (say, 1\%) of intervals at scale $n^{-\alpha}$ exceed expectation by a factor of 5, then either the polynomial is extremely large or the associated trigonometric polynomial has to have $\sim n$ roots.\\
\item There is a symmetry in the statement: we can always multiply the polynomial by a fixed complex number of modulus 1: this shows that we can replace the assumption  
$$ \Re \sum_{k=0}^{n}{a_k e^{i k\theta}} \qquad \mbox{having}~X~\mbox{real roots on}~[0, 2\pi]$$
by
$$  \Im  \sum_{k=0}^{n}{a_k e^{i k\theta}}  \qquad \mbox{having}~X~\mbox{real roots on}~[0, 2\pi]$$
or, more generally, for any $0 < x < 2\pi$, the equation
$$ \arg \sum_{k=0}^{n}{a_k e^{ikt}} = x \qquad \mbox{having}~X~\mbox{solutions on}~[0, 2\pi].$$
\item We also remark that the classical Erd\H{o}s-Tur\'an theorem does not apply since we only normalize $|a_n|=1$ but not $a_0$. There is, however, a refinement due to Erd\'{e}lyi \cite{erde} who showed that under solely that normalization there still is a regularity statement for those roots that are outside the unit disk. In our setting, $\Re p(e^{it})$ having few roots, the argument principle implies that most of the roots will be outside.\\
\item We remark that our result refines the Erd\H{o}s-Turan theorem as soon as the number of roots of the associated trigonometric polynomials has $\lesssim \sqrt{n}$ roots. In the generic setting, we expect such a trigonometric polynomial to have $\sim n$ roots. It is an interesting problem whether there are improvements of our results as soon as the associated trigonometric polynomial has $\lesssim n^{1-\varepsilon}$ roots.
\end{enumerate}

\section{Proof}
We start with some elementary considerations. We will make use of the argument $\arg(e^{it}-z_k)$ which is only defined up to multiples of $2\pi$. To bypass the usual difficulty of defining it, we will instead work with its derivative that has a nice global definition.
\subsection{A Lemma.} 
\begin{lemma} Let $|z| > 1$ be given by $z = r \cos{\theta} + i r \sin{\theta}$ (for $r>1$). Then
$$ \frac{d}{dt} \arg(e^{it} - z) = \frac{1-r \cos{(t - \theta)}}{1 - 2r \cos{(t - \theta)} + r^2}.$$
\end{lemma}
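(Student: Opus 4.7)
The proof is a direct computation, and the cleanest route is to use the identity $\arg(w) = \Im \log(w)$, which gives the global formula
\[
\frac{d}{dt}\arg(e^{it}-z) = \Im\!\left(\frac{i\,e^{it}}{e^{it}-z}\right).
\]
This bypasses the branch-cut issue mentioned just before the lemma: the right-hand side is defined globally since $|z|>1$ keeps the denominator nonzero.

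The plan is to rationalize the fraction by multiplying numerator and denominator by $\overline{e^{it}-z} = e^{-it} - \bar z$. The denominator becomes $|e^{it}-z|^2$, and the numerator becomes $ie^{it}(e^{-it}-\bar z) = i - i e^{it}\bar z$, whose imaginary part is $1 - \Re(e^{it}\bar z)$. Writing $z = re^{i\theta}$ so that $\bar z = re^{-i\theta}$ gives $e^{it}\bar z = r e^{i(t-\theta)}$, hence $\Re(e^{it}\bar z) = r\cos(t-\theta)$. For the denominator, the standard expansion
\[
|e^{it}-z|^2 = (e^{it}-z)(e^{-it}-\bar z) = 1 - z e^{-it} - \bar z e^{it} + r^2 = 1 - 2r\cos(t-\theta) + r^2
\]
completes the identity.

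As a sanity check, one could alternatively set $f(t)=\cos t - r\cos\theta$, $g(t)=\sin t - r\sin\theta$, and use $\frac{d}{dt}\arctan(g/f) = (fg'-gf')/(f^2+g^2)$; a trigonometric identity for $\cos(t-\theta)$ gives the same expression. I do not anticipate any genuine obstacle: the only subtlety is the multi-valuedness of the argument, which is precisely what the reformulation via $\Im\log$ resolves, and everything else is a two-line trigonometric calculation.
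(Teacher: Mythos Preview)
Your proof is correct. The paper's own argument is essentially the arctan route you list as a sanity check: it first reduces by rotational invariance to the special case where $z$ is a negative real number, then differentiates $\arctan\bigl(\sin t/(\cos t - z)\bigr)$ directly and invokes rotation to recover the general formula. Your $\Im\log$ computation handles arbitrary $z=re^{i\theta}$ in one stroke, so it avoids the WLOG reduction and the branch-choice issue for $\arctan$; otherwise the two arguments are the same calculation in different notation.
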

\begin{proof} We first assume w.l.o.g. that $z \in \mathbb{R}$ and $z < 0$. In that case, we have
$$ \arg(e^{it} - z) = \arctan{\left( \frac{\sin{t}}{\cos{t} + z}\right)}.$$
A computation shows that 
$$ \frac{d}{dt} \arctan{\left( \frac{\sin{t}}{\cos{t} + z}\right) }= \frac{1 + z \cos{t}}{1 + 2z \cos{t} + z^2}.$$
The general case then follows from rotational invariance.
\end{proof}
As expected, for $r > 1$,
 $$\int_{0}^{2\pi}{\frac{1-r \cos{(t - \theta)}}{1 - 2r \cos{(t - \theta)} + r^2}dt} = 0.$$
Moreover, we will use the inequality, valid for any $1 < r \leq 1.1$ that
\begin{equation} \label{drop}
\int_{\theta-(r-1)}^{\theta+(r-1)}{\frac{1-r \cos{(t - \theta)}}{1 - 2r \cos{(t - \theta)} + r^2}dt} \leq -\frac{3}{2}.
\end{equation}
Here, the constant $-3/2$ could be replaced by $-1.52$ but this is of no further importance for the rest of the argument. The proof of the inequality
proceeds by rewriting the integrand as a derivative of the argument which turns the integral into an elementary trigonometric problem. The curvature of the circle dictates that the integral is monotonically increasing in $r$ which shows that it suffices to evaluate the integral for $r=1.1$.
We also note an elementary inequality, valid for all $r>1$,
\begin{equation} \label{drop2}
  \frac{1-r \cos{(t - \theta)}}{1 - 2r \cos{(t - \theta)} + r^2} \leq \frac{1}{1+r} \leq \frac{1}{2}
  \end{equation}
and observe that for all $r>1$ and all intervals $J \subset [0,2\pi]$
$$\int_{J}^{}{\frac{1-r \cos{(t - \theta)}}{1 - 2r \cos{(t - \theta)} + r^2}dt} \leq \pi.$$
For points inside the disk, this is slightly different since the argument is monotonically increasing and thus, for all $|z|<1$ and $J \subset  [0,2\pi]$
$$0 \leq \int_{J} \frac{d}{dt} \arg(e^{it} - z) dt \leq  \int_{0}^{2\pi} \frac{d}{dt} \arg(e^{it} - z) dt = 2\pi.$$

\subsection{Proof of the Theorem.}

\begin{proof} We write
$$ p(z) = e^{i \theta}\prod_{k=1}^{n}{(z-z_k)} \qquad \mbox{for some}~\theta \in [0, 2\pi].$$
The constant $\theta$ will not be of any importance for subsequent arguments (it corresponds to a rotation of the complex plane but the statement is rotation-invariant), thus we can set $\theta = 0$ without loss of generality. A simple consequence is
$$\arg p(e^{it}) = \sum_{k=1}^{n}{ \arg( e^{it} - z_k)}.$$

The argument principle implies that $|z_k| > 1$ for all but at most $X$ roots: by the argument principle, every root inside the unit disk corresponds to winding around the origin once which necessitates crossing the $y-$axis twice -- this corresponds to the real part vanishing and happens at most $X$ times in total.
 We argue that (this is a form of Jensen's theorem but also follows from straightforward computation)
\begin{equation} \label{jensen}
 \int_{\partial \mathbb{D}}{ \log{|p(e^{it})|} dt} = \sum_{k=1}^{n}{   \int_{\partial \mathbb{D}}{ \log{|e^{it}-z_k|} dt} } = 2\pi \sum_{|z_k| > 1}^{n}{ \log{|z_k|}}.
\end{equation}

We now assume that $(I_{\alpha})_{\alpha}$ is a collection of disjoint intervals of length $n^{-\alpha}$ all of which have the property that the number of roots whose argument is in that interval is a factor 5 larger than expected
$$ \# \left\{1 \leq k \leq n: \arg z_k \in I_j \right\} \geq 5 \frac{ n^{1-\alpha}}{2\pi}.$$
Let us now assume that $J$ is such an interval. We will study the size of
$$ \int_{J}{ \frac{d}{dt} \arg p(e^{it}) dt}.$$
By the fundamental Theorem of Calculus, this is merely the total change of the argument over the interval. If this quantity is very large (say $>2\pi k$ for some $k \in \mathbb{N}$) or very small (say $<-2\pi k$), then $p(e^{it})$ must necessarily wind around the origin $k$ times creating $2k$ roots of $\Re p(e^{it})$ in the process. For trigonometric polynomials with no or few roots, we therefore expect that this integral is close to 0 for most intervals.
We introduce the set of roots that are in the corresponding cone, $A = \left\{z_i: \arg z_i \in J\right\}$, and the set of remaining roots $B = \left\{z_i: \arg z_i \notin J\right\}$. Any root that is in the origin, $z_k=0$, has an undefined argument and we define those to not be in $A$ and to be in $B$. We can split the integral as
$$ \int_{J}{ \frac{d}{dt} \arg p(e^{it}) dt} =\int_{J}{ \sum_{z_k \in A}{   \frac{d}{dt} \arg (e^{it} - z_k) dt}} + \int_{J}{ \sum_{z_k \in B}{   \frac{d}{dt} \arg (e^{it} - z_k) dt}}.$$

We subdivide $B$ into the set $B_1 = \left\{z_k \in B: |z_k| \geq 1\right\}$ and $B_2 = B \setminus B_1$. Equation \eqref{drop2} implies
\begin{equation} \label{bound}
 \int_{J}{ \sum_{z_k \in B_1}{   \frac{d}{dt} \arg (e^{it} - z_k) dt}}  \leq \frac{1}{2} |B_1| |J| \leq \frac{n}{2} n^{-\alpha}.
 \end{equation}
We decompose the set $A$ into three sets 
\begin{align*}
A_1 &= \left\{z_k \in A: |z_k| \geq 1 + n^{-\alpha} \right\}, \\
A_2 &= \left\{z_k \in A: 1 <  |z_k| < 1 + n^{-\alpha} \right\}, \\
A_3 &= \left\{z_k \in A: 0 < |z_k| \leq 1 \right\}.
\end{align*}
All three sets are somewhat problematic: roots in $A_1$ are far away from the unit disk and increase the size of the polynomial by Jensen's formula. Roots in $A_3$ necessarily create roots of $\Re p(e^{it})$ by the argument principle. This leaves $A_2$: here, we have that roots contribute a great deal to the argument integral. This effect can be counterbalanced by contributions from other roots that are globally distributed and yield a contribution of the opposite sign. 
\begin{figure}[h!]
\begin{center}
\begin{tikzpicture}[scale=1.2]
\draw [thick] (0,0) circle (2cm);
\draw [ultra thick,domain=0:45] plot ({2*cos(\x)}, {2*sin(\x)});
\draw [dashed] (0,0) -- (3.5,0);
\draw [dashed] (0,0) -- (2.4,2.4);
\node at (1,0.5) {\LARGE $A_3$};
\draw [dashed,domain=0:45] plot ({2.8*cos(\x)}, {2.8*sin(\x)});
\node at (2.1,1.1) {\LARGE $A_2$};
\node at (3,1.5) {\LARGE $A_1$};
\node at (-2.2,1.2) {\LARGE $B_1$};
\node at (-2.2,-1.2) {\LARGE $B_1$};
\node at (1.2,2.2) {\LARGE $B_1$};
\node at (3.2,-1.2) {\LARGE $B_1$};
\node at (-0.4,0) {\LARGE $B_2$};
\end{tikzpicture}
\end{center}
\caption{The regions into which we subdivide.}
\end{figure}
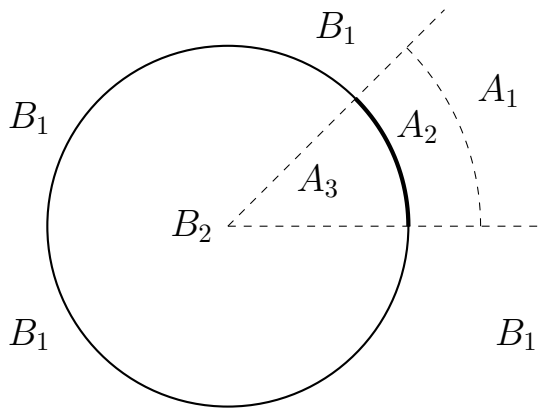
We now make this procise
and start by noting that, by assumption, 
\begin{equation} \label{alot}
|A_1| + |A_2| + |A_3|\geq  5\frac{n^{1-\alpha}}{2\pi}.
\end{equation}
However, if $A_1$ is big, then these roots contribute a lot to the size of $\log{|p(z)|}$ since
\begin{equation} \label{gain}
\sum_{z_k \in A_1}{\log{|z_k|}} \geq \log{(1 + n^{-\alpha})} |A_1| \geq \frac{n^{-\alpha}}{2} |A_1|.\end{equation}
Conversely, if $|A_2|$ is big, we have from \eqref{drop} and $z_k \in A_2$ that
$$  \int_{J}{ \sum_{z_k \in A_2}{   \frac{d}{dt} \arg (e^{it} - z_k) dt}}  \leq -\frac{3|A_2|}{4}.$$
For every $|z_k| < 1$ and every $ 0 \leq t \leq 2\pi$
$$ \frac{d}{dt} \arg (e^{it} - z_k) \geq 0 \qquad \mbox{and} \qquad \int_{J}  \frac{d}{dt} \arg (e^{it} - z_k) dt \leq 2\pi$$ 
and this results in the trivial esimate
$$  \int_{J}{ \sum_{z_k \in A_3}{   \frac{d}{dt} \arg (e^{it} - z_k) dt}}  \leq  2 \pi |A_3|.$$
We bound the number of roots of the associated trigonometric polynomial over the arc by
\begin{align*}
 \# \left\{t \in J: \Re p(e^{it}) = 0\right\} \geq \left| \int_{J}{ \frac{d}{dt} \arg p(e^{it}) dt} \right| 
 \end{align*}
 and resolve the integral by using a reverse triangle inequality. More precisely, omitting the actual functions (which are merely the arguments over the corresponding terms), we argue, using the just derived estimates and \eqref{bound},
 \begin{align*}
   \left| \int_{J}{ \frac{d}{dt} \arg p(e^{it}) dt} \right| &=  \left| \int_{J} \sum_{z_k \in A_1} + \int_{J} \sum_{z_k \in A_2} + \int_{J} \sum_{z_k \in A_3} + \int_{J} \sum_{z_k \in B_1} + \int_{J} \sum_{z_k \in B_2} \right| \\
   &\geq \left| \int_{J} \sum_{z_k \in A_2} \right| - \left| \int_{J} \sum_{z_k \in A_1} \right|  - \left| \int_{J} \sum_{z_k \in A_3} \right|  - \left| \int_{J} \sum_{z_k \in B_1} \right|  - \left| \int_{J} \sum_{z_k \in B_2} \right| \\
&\geq   \frac{3 |A_2|}{4} - \pi |A_1| - 2\pi |A_3|  - \frac{n^{1-\alpha}}{2}  - \left| \int_{J} \sum_{z_k \in B_2} \right|   
 \end{align*}
Altogether, this shows that
\begin{equation} \label{nice}
  \# \left\{t \in J: \Re p(e^{it}) = 0 \right\} + \pi |A_1| + 2\pi |A_3| + \left| \int_{J} \sum_{z_k \in B_2} \right|   \geq \frac{3}{4}|A_2| - \frac{n^{1-\alpha}}{2}.
  \end{equation}
  We note that this inequality only conveys information when $|A_2| > 2 n^{1-\alpha}/3$.
The next idea is to introduce the term
$$ n^{-\alpha} X+ \sum_{|z_k| > 1}{\log{|z_k|}},$$
where $X$ is the total number of roots of $\Re p(e^{it})$,
as quantity and argue that this quantity, when summed over all such intervals $J$ from the collection of intervals, is large. The key ingredient is \eqref{alot} implying that at least one of the sets $A_1, A_2, A_3$ is large (where large means $\gtrsim n^{1-\alpha}$). We distinguish three cases: (a) Suppose $|A_1| \gtrsim n^{1-\alpha}$. Then, by \eqref{gain}, the term goes up at least by $\gtrsim n^{1-2\alpha}$. (b) If $|A_3| \gtrsim n^{1-\alpha}$, then the global number of roots of $p(z)$ that are contained inside the unit disk goes up by $\gtrsim n^{1-\alpha}$ and thus, by the argument principle, our quantity of interest increases by at least $\gtrsim n^{1-2\alpha}$. The remaining case (c) is that both these quantities, $|A_1|$ and $|A_3|$ are small, say $\leq 10^{-100} n^{1-\alpha}$. However, using \eqref{alot} and \eqref{nice}, we see that then necessarily
$$  \# \left\{t \in J: \Re p(e^{it}) = 0\right\} + \left| \int_{J} \sum_{z_k \in B_2} \right| \gtrsim n^{1-\alpha}.$$
Suppose now there are a total of $k$ disjoint intervals of length $n^{-\alpha}$ having too many roots that decompose into the three cases via $k = k_1 + k_2 + k_3$ where $k_1$ counts the number of intervals resulting in case (a), $k_2$ the number resulting in case (b) and $k_3$ the number resulting in case (c). We have already shown that
\begin{equation} \label{half}
 n^{-\alpha} X+ \sum_{|z_k| > 1}{\log{|z_k|}} \gtrsim (k_1 + k_2) n^{1-2\alpha}
 \end{equation}
and if $k_1 + k_2 \sim k$, then we have established the desired result. However, perhaps the first two cases hardly appear and the third case is the most frequent and $k_3 \sim k$. Summing over these cases, we obtain
\begin{align*}
 k_3 \cdot n^{1-\alpha} \lesssim \sum_{J} \left(  \# \left\{t \in J: \Re p(e^{it}) = 0\right\} + \left| \int_{J} \sum_{z_k \in B_2(J)} \right| \right),
 \end{align*}
 where the notation indicates that $B_2(J)$ is always defined in relation to the interval $J$. One sum can be bounded easily via
 $$ \sum_{J} \# \left\{t \in J: \Re p(e^{it} = 0) \right\} \lesssim X.$$
 We note that roots in $B_2$ are all inside the unit disk and thus the argument is always positive and, conversely, any root inside the unit disk always contributes a positive argument. This allows us to bound
\begin{align*}
 \sum_{J} \left|  \int_{J}{ \sum_{z_k \in B_2(J)}{   \frac{d}{dt} \arg (e^{it} - z_k) dt}}\right| &=
  \sum_{J}   \int_{J}{ \sum_{z_k \in B_2(J)}{   \frac{d}{dt} \arg (e^{it} - z_k) dt}} \\
  &\leq   \sum_{J}   \int_{J}{ \sum_{|z_k| \leq 1}{   \frac{d}{dt} \arg (e^{it} - z_k) dt}} \\
  &\leq       \int_{0}^{2\pi}{ \sum_{|z_k| \leq 1}{   \frac{d}{dt} \arg (e^{it} - z_k) dt}} \\
&\leq 2\pi \# \left\{z_k: |z_k| \leq 1\right\} \leq 2\pi X.
  \end{align*}
 Altogether, this shows that
 \begin{align*}
 k_3 \cdot n^{1-\alpha}  \lesssim  X + n^{\alpha} \sum_{|z_k| > 1}{\log{|z_k|}}  \end{align*}
which, when multiplying with $n^{\alpha-1}$ and adding to \eqref{half} implies
$$ k = k_1+k_2+k_3 \lesssim X n^{\alpha-1} + n^{2\alpha-1} \sum_{|z_k| > 1}{\log{|z_k|}}  $$
which, combined with Jensen's formula \eqref{jensen}, implies the result. 
  \end{proof}

\end{document}